\newtheorem{thm}{Theorem}
\newtheorem{lem}[thm]{Lemma}
\theoremstyle{definition}
\theoremstyle{remark}
\begin{document}

\title{\centering{A Note On Free Boundary Minimal Annulus }}
\author{Shuangqi Liu \ \& \  Zuhuan Yu \thanks{School of Mathematical Science, Capital Normal University, Beijing 100048, China; yuzh@cnu.edu.cn}}





\maketitle

\begin{abstract}
  In this note we investigate free boundary minimal surfaces in the Euclidean 3-space, and by using holomorphic techniques developed by Fraser and Schoen we prove that the free boundary minimal annulus is the critical catenoid.
\end{abstract}



\section{Introduction}

The free boundary minimal surfaces come from the studying of partitioning of convex bodies and it has been studied for a long time. A classical result due to J. C. C. Nitsche [1] is that the minimal disc contained in the unit ball ${\mathbb{B}}^3$ in euclidean space and meeting the boundary $\partial {\mathbb{B}}^3$ orthogonally must be a flat equator, and it has an interesting generalization to the higher codimensions, which was obtained by Fraser and Schoen [2].

In recent years many free boundary minimal surfaces have been constructed out. Fraser and Scheon [3] constructed embedded surfaces of genus zero with any number of boundary components by finding the metrics on these surfaces that maximize the first Steklov eigenvalue with fixed boundary length. Pacard, Folha and Zolotareva [4] found examples of genus zero or one, and with any number of boundary components greater than a large constant. Martin Li and Kapouleas [5] constructed free boundary minimal surfaces with three boundary components and arbitrarily large genus.

There are lots of results of classification of free boundary minimal surfaces in unit ball ${\mathbb{B}}^3$ in euclidean 3-space. Ambrozio and Nunes [6] get a gap theorem for free boundary minimal surfaces which single out the flat disc and critical catenoid. In their paper [7] Fraser and Scheon prove that flat disc is the only free boundary minimal surface with Morse index equals to one and critical catenoid is the only free boundary minimal surface immersed by its first Steklov  eigenfunctions. Smith, Zhou [8], Tran [9] and Devyver [10] state that the Morse index of the critical catenoid equals to four.

Like the result of Nitsche for the flat equator disc, a natural question is whether a free boundary minimal annulus is the critical catenoid, which is a conjecture in [11, 12]. Under symmetric conditions MacGrath [13] shows the conjecture is true. Nadirashvili and Penskoi [14] prove the conjecture by using a connection between free boundary minimal surfaces and free boundary cones arising in a one-phase problem. In this note we prove it in a different way.

\begin{thm}{}
 A free boundary minimal immersed annulus in unit ball ${\mathbb{B}}^3$ in euclidean 3-space is congruent with the critical catenoid.
\end{thm}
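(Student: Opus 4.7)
My plan is to apply Schwarz reflection to the Hopf differential of the immersion, in the spirit of the holomorphic techniques of Fraser and Schoen, and then to exploit the rigidity this produces.

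First, uniformize $A$ as a flat cylinder $\Omega = \{w = \theta + it : \theta \in \mathbb{R}/2\pi\mathbb{Z},\ -T \le t \le T\}$ so that $\phi \colon A \to \overline{\mathbb{B}}^3$ becomes conformal. Then each $\phi^i$ is harmonic, and the Hopf differential
\[
\Phi \;=\; \langle \phi_{ww}, N\rangle\,dw^2 \;=:\; F(w)\,dw^2
\]
is a holomorphic quadratic differential on the open cylinder. A direct computation using $\Delta\phi = 0$ (hence $II(\partial_t,\partial_t) = -II(\partial_\theta,\partial_\theta)$) yields $F = \tfrac{1}{2} II(\partial_\theta,\partial_\theta) - \tfrac{i}{2} II(\partial_\theta,\partial_t)$, so $\IM F = -\tfrac{1}{2} II(\partial_\theta,\partial_t)$.

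The key step is to show $F$ is real on $\partial A$. The free boundary condition $d\phi(\eta) = \phi$, for the unit outward conormal $\eta = \pm\partial_t/\sqrt{\rho}$, reads $\phi_t = \pm\sqrt{\rho}\,\phi$ on $t = \pm T$. Differentiating this identity tangentially in $\partial_\theta$ gives $\phi_{\theta t} = \pm(\sqrt{\rho})_\theta\,\phi \pm \sqrt{\rho}\,\phi_\theta$ on $\partial A$. Pairing with the unit normal $N$ kills both terms: $\phi_\theta\cdot N = 0$ because $\phi_\theta$ is tangent to the immersion, while $\phi\cdot N = 0$ on $\partial A$ because orthogonal intersection with $S^2$ forces $N$ to lie in $T_{\phi}S^2 = \phi^\perp$. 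Thus $II(\partial_\theta,\partial_t) = 0$ on $\partial A$ and $F$ is real on both boundary circles. Schwarz reflection across $t = T$ and $t = -T$ then extends $F$ to a holomorphic function on the doubled cylinder $\mathbb{R}/2\pi\mathbb{Z}\times\mathbb{R}/4T\mathbb{Z}$, a compact torus, on which any holomorphic function must be constant. Hence $F\equiv c$ for some $c\in\mathbb{R}$, and $c\ne 0$ since $c = 0$ would force the immersion to be totally geodesic, hence planar, which is incompatible with the annular free boundary topology in $\overline{\mathbb{B}}^3$. Thus $\Phi = c\,dw^2$.

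To conclude that $\phi(A)$ is the critical catenoid, exploit the translation $\sigma_\alpha(w) = w + \alpha$: it preserves the boundary components, the conformal structure, and the Hopf differential. A Bonnet-type rigidity for minimal immersions with prescribed conformal structure and Hopf differential, combined with the Weierstrass period conditions needed for single-valuedness on the cylinder, then yields $\phi\circ\sigma_\alpha = R_\alpha\circ\phi$ for a one-parameter family of isometries $R_\alpha$ of $\overline{\mathbb{B}}^3$. Preserving the condition $|\phi|^2 = 1$ on $\partial A$ forces $R_\alpha$ to fix the origin, so $\{R_\alpha\}$ is a rotation group about some axis. Therefore $\phi(A)$ is a minimal surface of revolution in $\mathbb{R}^3$, hence a catenoid; the free boundary condition then singles out the critical one, characterized by $T\tanh T = 1$ after rescaling. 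I expect the main obstacle to be this final step: the Bonnet rigidity must be carried out carefully in the multiply-connected setting, tracking Weierstrass periods and ruling out non-rotationally symmetric solutions of the coupled Liouville-type equation $\Delta\log\rho = 8c^2/\rho$ with the free boundary conditions; an alternative is to use the rotational Killing field Jacobi fields $\langle V,N\rangle$ with the Robin boundary conditions induced by free boundary.
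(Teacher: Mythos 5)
Your first half is essentially the paper's Lemma in Section 3: the Hopf differential is holomorphic, the free boundary condition kills the off-diagonal term $II(\partial_\theta,\partial_t)$ on $\partial A$ (your computation $\phi_{\theta t}\cdot N=0$ is exactly the paper's $U^{\bot}_{r\theta}=0$), and a holomorphic function real on both boundary circles is constant. (The paper argues via the vanishing of the harmonic imaginary part rather than reflection to the doubled torus, but these are interchangeable.) The nondegeneracy $c\neq 0$ is also handled the same way. Up to this point the proposal is correct and matches the paper.

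The final step, however, contains a genuine gap, and it sits exactly where the real difficulty of the conjecture lives. Bonnet rigidity lets you conclude $\phi\circ\sigma_\alpha=R_\alpha\circ\phi$ only if $\sigma_\alpha$ preserves \emph{both} fundamental forms. The translation does preserve the second fundamental form (which in the $w$-coordinate is the constant matrix $\mathrm{diag}(2c,-2c)$), but it is not known at this stage to preserve the first fundamental form $\rho(\theta,t)\,|dw|^2$: that would require $\rho$ to be independent of $\theta$, which is precisely the rotational symmetry you are trying to prove. The conformal factor is constrained only by the Liouville equation $\Delta\log\rho=8c^2/\rho$ together with the free boundary conditions, and uniqueness or $\theta$-independence of solutions of this overdetermined problem is not automatic --- it is essentially equivalent to the theorem itself (this is the route taken by Nadirashvili--Penskoi via a one-phase free boundary problem). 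You flag this yourself as ``the main obstacle,'' but no argument is supplied, so the proof is incomplete. For comparison, the paper closes the argument by an entirely different mechanism: it writes the surface via the Weierstrass data $(g,\tfrac{A}{2}e^{i\theta_0}g_z^{-1}z^{-2}dz)$ forced by the constant Hopf differential, shows by a series expansion at an arbitrary boundary point that the boundary curves have vanishing torsion and are therefore planar circles, uses the harmonicity of the coordinate functions plus the free boundary condition (Green's formula) to place the two circles in parallel planes, and then reads off from the Fourier coefficients of the third coordinate that $g/g_z=c_1z$, i.e.\ $g=cz^{\pm1}$, which is the catenoid. If you want to salvage your approach you would need an independent symmetry result for the Liouville problem with these boundary conditions; otherwise the Weierstrass/boundary-circle route is the one that actually terminates.
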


\section{The critical Catenoid}

In this section we introduce the critical catenoid and compute the first and second fundament form. Let $\Omega$ be an annulus domain in complex plane $\mathbb{C}$, assume $\Omega = \{z|z=x+yi=re^{i\theta}, 0<r_1\leq r \leq r_2, 0\leq \theta < 2\pi\}.$ The critical Catenoid is the conformal minimal immersion $X: \Omega \rightarrow R^3$ given by
$$ X(z)=\{ a\cosh(\ln r)cos\theta, a\cosh(\ln r)\sin \theta, a\ln r\},$$
and the parameters $a, r_1, r_2$ are determined by the following relations
 $$ r_1r_2=1, \  \ r^2_2+1=(r^2_2-1)\ln r_2, \ \  a=2r_2((r^2_2+1)\ln r_2)^{-1}.$$
Through a direct computation, we have the first and second fundamental forms in polar coordinates as follows
$$ I=Edr^2+Gd\theta^2,\ \ E=\frac{a^2}{r^2}\cosh^2(\ln r),\ \ F=0, \ \ G=a^2\cosh^2 (\ln r),$$
$$II=Ldr^2+Nd\theta^2,\ \ L=-\frac{a}{r^2}, \ \ M=0, \ \ N=a.$$
Here we choose the normal vector fields insuring $N$ is positive, and from the results $F=M=0$, we know that coordinate lines ($r-$line and $\theta-$line) are lines of curvature, namely tangent directions of the coordinate curves are principle directions.
\section{The second fundamental form of the annulus}
 Let us consider the free boundary minimal annulus in the unit ball $\mathbb{B}^3$ in Euclidean 3-space. By taking the global isothermal coordinate, using the conformal and minimal condition, and appealing the technique due to Fraser and Scheon, we can show the second fundamental form is the same as that form of the critical catenoid.

 The free boundary minimal annulus is given by the conformal immersion $U: \Omega \rightarrow \mathbb{B}^3 \subset R^3, U(\partial \Omega)\subset \mathbb{S}^2=\partial \mathbb{B}^3$. Consider the conformal structure of the annulus, we can take $\Omega = \{z|z=x+yi=re^{i\theta}, 0<R^{-1}\leq r \leq R, R>1, 0\leq \theta < 2\pi\}.$ The induced metric is written as
 $$
 I=\lambda(dzd\bar{z})=\lambda (dr^2+r^2d\theta^2)=Edr^2+2Fdrd\theta+Gd\theta^2, $$  $$\\  E=\lambda,\  F=0, \  G=\lambda r^2.
 $$
 Now we apply the methods used by Fraser and Scheon in their paper [2]. By the conformal and minimal condition we have
 $$
 U_z\cdot U_{z} =0,\ \  U_{z\bar{z}}=0.
 $$
One can easily check that $(U_{zz}\cdot U_{zz})_{\bar{z}}=0$, therefore $U_{zz}\cdot U_{zz}$ is a holomorphic function. By $0=(U_z\cdot U_z)_z=2U_{zz}\cdot U_z$, $U_{zz}$ can be decomposed as
$$
U_{zz}=U_{zz}^{\bot}+xU_z,\ \  x=\frac{U_{zz}\cdot U_{\bar{z}}}{|U_z|^2},
$$
$U_{zz}^{\bot}$ is the projection part on normal space, and this implies $ U_{zz}^{\bot}\cdot U_{zz}^{\bot}=U_{zz}\cdot U_{zz},$ then the inner product of the normal part $ U_{zz}^{\bot}\cdot U_{zz}^{\bot}$ is also a holomorphic function.

Appealing to the polar coordinates $(r, \theta)$, we rewrite the $U_{zz}$ as
$$
U_{zz}=e^{-2i\theta}(U_{rr}-\frac{1}{r^2}U_{\theta \theta}-\frac{2i}{r}U_{\theta r}+\frac{2i}{r^2}U_{\theta}-\frac{1}{r}U_r),
$$
Thus
\begin{align*}
  z^4 U_{zz}^{\bot}\cdot U_{zz}^{\bot} &=(r^2U_{rr}^{\bot}-U_{\theta \theta}^{\bot}-2irU_{\theta r}^{\bot})^2,\\
                                       &=((r^2L-N-2irM)n)^2,\\
                                       &=(r^2L-N-2irM)^2,\\
                                       &=(r^2L-N)^2-4r^2M^2-4rM(r^2L-N)i,
\end{align*}
which is a holomorphic function on $\Omega$, where $n$ is the normal vector fields, while $L, M, N$ are the quantities of the second form.

In the following we prove the main theorem of this section. According the above coordinates the second fundamental form of the minimal annulus can be clearly formulated in bellow.
\begin{lem}
The second fundamental form of the free boundary minimal annulus can be written as
 $$
 II=Ldr^2+2Mdrd\theta+Nd\theta^2,\ \ \ L=-\frac{A}{r^2},\ \ M=0,\ \ N=A,
 $$
 here A is a positive constant number.
 \end{lem}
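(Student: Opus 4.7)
The plan is to combine three ingredients: the holomorphic function $\psi(z):=z^{4}\,U_{zz}^{\perp}\cdot U_{zz}^{\perp}=(r^{2}L-N-2irM)^{2}$ produced above, the minimality of $U$, and the free boundary condition, and then to use a Laurent expansion on the annulus $\Omega$ to force $\psi$ to be a real constant.

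First I would encode the minimality condition. In the conformal polar coordinates with $E=\lambda$, $F=0$, $G=\lambda r^{2}$, the vanishing mean curvature $H=\tfrac{1}{2}(L/E+N/G)$ reduces to $r^{2}L+N=0$. Substituting $r^{2}L=-N$ into $\psi$ gives
\[
\psi(z)=4N^{2}-4r^{2}M^{2}+8i\,rMN.
\]
Then I would turn the free boundary condition into the pointwise identity $M\equiv 0$ on $\partial\Omega$: along each circle $r=R^{\pm 1}$, $|U|=1$ forces $U\cdot U_{\theta}=0$, and the orthogonal meeting with $\mathbb{S}^{2}$ means $U$ is tangent to the surface, so $U\cdot n=0$ and $U_{r}=c(\theta)\,U$; differentiating the latter in $\theta$ and dotting with $n$ yields $M=c_{\theta}(U\cdot n)+c(U_{\theta}\cdot n)=0$. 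Consequently $\psi|_{\partial\Omega}=4N^{2}$ is real and nonnegative on both boundary circles.

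The core step is to promote boundary reality to global constancy. Writing the Laurent expansion $\psi(z)=\sum_{n\in\mathbb{Z}}a_{n}z^{n}$, reality of $\psi$ on $|z|=R$ gives $a_{-n}=\bar{a}_{n}R^{2n}$, while reality on $|z|=R^{-1}$ gives $a_{-n}=\bar{a}_{n}R^{-2n}$. Since $R>1$, these two relations force $a_{n}=0$ for every $n\ne 0$, so $\psi\equiv c$ for some real constant $c\ge 0$ (the boundary values being nonnegative). Reading off $\psi\equiv c$, the imaginary part gives $MN\equiv 0$ and the real part gives $N^{2}-r^{2}M^{2}=c/4$. The case $c=0$ would collapse these to $L\equiv M\equiv N\equiv 0$, yielding a totally geodesic surface, incompatible with an annulus in $\mathbb{B}^{3}$. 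Hence $c>0$; then $N$ cannot vanish (else the real part would be negative), which together with $MN=0$ forces $M\equiv 0$ and $N^{2}\equiv c/4$. By continuity $N$ has constant sign, and choosing the normal so that $N>0$ gives $N\equiv A:=\tfrac{1}{2}\sqrt{c}>0$; the identity $r^{2}L=-N$ then delivers $L=-A/r^{2}$.

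The main obstacle, and the essential use of the annulus topology, is the passage from boundary reality to global constancy: on a disk a single boundary circle of real values would only extend $\psi$ across that circle to an entire function, with no guarantee of constancy. It is the presence of two boundary circles, together with $R>1$, that over-determines the Laurent coefficients and collapses $\psi$ to a constant; once this is in hand the extraction of $M\equiv 0$, $N\equiv A$, and $L=-A/r^{2}$ is purely algebraic.
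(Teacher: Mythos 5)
Your proof is correct and follows essentially the same route as the paper: the free boundary condition gives $M=0$ on $\partial\Omega$, hence the holomorphic function $z^4\,U_{zz}^{\perp}\cdot U_{zz}^{\perp}$ is real on the boundary and therefore constant, and the rest is algebra with the minimality relation $r^2L+N=0$; your Laurent-coefficient computation simply makes explicit the constancy step that the paper asserts without proof. One small quibble: that constancy does not actually hinge on having two boundary circles --- the imaginary part of the holomorphic function is harmonic and vanishes on all of $\partial\Omega$, hence vanishes identically by the maximum principle, an argument that works just as well on a disk --- so your closing paragraph overstates the role of the annulus topology, though this does not affect the validity of your argument.
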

 \begin{proof}
 By the free boundary condition,$U_r$ is orthogonal to $\partial \mathbb{B}^3$ along the boundary of the annulus ($\partial \Omega $), so $U_r =k U$, and $k$ is defined on the boundary $\partial \Omega $, then
 $$
U_{r\theta} =k_{\theta}\ U+kU_{\theta}=\frac{k_{\theta}}{k}u_r+kU_{\theta},
 $$
  and so $U^{\bot}_{r\theta} =0$, namely $M=0$ on $\partial \Omega $, this implies the holomorphic function  $z^4 U_{zz}^{\bot}\cdot U_{zz}^{\bot}$ on $\Omega$ takes real values on the boundary $\partial \Omega,$ then it must be constant in $\bar {\Omega}$. We get
  \begin{align*}
  (r^2L-N)^2-4r^2M^2&=(r^2L-N)^2|_{\partial \Omega}=const.\geq0,\\
   -4rM(r^2L-N)&=0,
   \end{align*}
 and both of which hold in $\bar{\Omega}$. The constant number must be positive. If not, $const.=0$, by minimal condition $EN+GL=0$, we have
   \begin{align*}
  (r^2L-N)^2-4r^2M^2&=0,\\
   -4rM(r^2L-N)&=0,\\
   \lambda N+\lambda r^2L&=0,
   \end{align*}
   since $\lambda >0, \ 0<R^{-1}<r<R$, hence $L=M=N=0$ in $\Omega$, which means the free boundary minimal annulus is totaly geodesic and flat, which can not occur. So in the closed domain $\bar{\Omega}$, we have
   \begin{align*}
  (r^2L-N)^2-4r^2M^2&=const.>0,\\
   -4rM(r^2L-N)&=0,\\
   \lambda N+\lambda r^2L&=0,
   \end{align*}
   by the first equation, $(r^2L-N)^2 >0$, then $M=0$ from the second equation, so
   \begin{align*}
  r^2L-N&=const.\neq 0,\\
     N+r^2L&=0,
    \end{align*}
    we obtain that $N=A >0$, by choosing a suitable normal vector field, $L=-\frac{A}{r^2}$, and the lemma is proved.
\end{proof}

\section{The Gauss equation and Gauss map of the annulus}

From the last section we know that the Gauss curvature of the minimal annulus reads
$$
 k=-\frac{A^2}{\lambda^2 r^4}.
$$
On the other hand we have
$$
 k=\frac{1}{\lambda}\Delta (\ln \frac{1}{\sqrt{\lambda}}),
$$
where
$$
\Delta=\frac{\partial^2}{\partial x^2}+\frac{\partial^2}{\partial y^2}\ \\ =4\frac{\partial^2}{\partial z\partial {\bar z}}\ \\ =\frac{\partial^2}{\partial r^2}+\frac{1}{r}\frac{\partial}{\partial r}+\frac{1}{r^2}\frac{\partial^2}{\partial \theta^2}.
$$
Denote $\phi=\ln \frac{1}{\sqrt \lambda}$, so the Gauss equation is written as
\begin{equation}
\Delta \phi+\frac{A^2}{r^4}e^{2\phi}=0.
\end{equation}

Now we apply the Weierstrass formula of the minimal annulus. Let $(g,fdz)$ be the Weierstrass data on the annulus, in which $g$ is the Gauss map of the minimal annulus which is a meromorphic function and $f$ is holomorphic function. Both of them satisfy
\begin{enumerate}
  \item  a pole point of order l of $g$ is exactly a zero point of order 2l of $f$,
  \item  the real periods of the forms $((1-g^2)fdz, i(1+g^2)fdz, 2gfdz)$ are zero.
\end{enumerate}
The minimal immersion $U: \Omega \rightarrow R^3$ is
\begin{equation}
U(z)=\text{Re} \int \{(1-g^2)fdz, i(1+g^2)fdz, 2gfdz\}£¬
\end{equation}
and the metric $I=|f(z)|^2(1+|g(z)|^2)^2dzd{\bar z}$, then $\lambda=|f(z)|^2(1+|g(z)|^2)^2, $ hence $\phi=-\frac{1}{2}\ln |f|^2-\frac{1}{2}\ln (1+|g|^2)^2.$ By a direct computation, one shows
\begin{align*}
\Delta \phi =-4\frac{|g_z|^2}{(1+|g|^2)^2},\ \
e^{2\phi} =\frac{1}{\lambda}=\frac{1}{|f|^2(1+|g|^2)^2}.
\end{align*}
Using the Gauss equation (1) we get $2|fg_zz^2|=|A|,$ thus $$f(z)=\frac{A}{2}\frac{e^{i\theta_0}}{g_zz^2},$$
here $\theta_0$ is a constant real number, and by the condition 1, $g$ only has pole and zero points of order 1. In summary we have
\begin{thm}
The free boundary minimal annulus is determined by its Gauss map and the Weierstrass data can be written as
 \begin{equation}
 (g,\omega)=(g, \frac{A}{2}\frac{e^{i\theta_0}}{g_zz^2}dz).
 \end{equation}
 \end{thm}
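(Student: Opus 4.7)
The plan is to combine the Gauss equation (1) with the already-computed formulas for $\Delta\phi$ and $e^{2\phi}$ in terms of the Weierstrass data, and then exploit the fact that a meromorphic function of constant positive modulus on a connected domain must be constant.

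First I would substitute $\Delta\phi=-4|g_z|^2/(1+|g|^2)^2$ and $e^{2\phi}=1/(|f|^2(1+|g|^2)^2)$ into (1) and cancel the common factor $(1+|g|^2)^{-2}$, obtaining the pointwise identity
$$
4\,r^4\,|f(z)|^2\,|g_z(z)|^2 \;=\; A^2, \qquad\text{equivalently}\qquad \bigl|z^2\, f(z)\, g_z(z)\bigr|\;=\;\tfrac{A}{2},
$$
on the annulus $\Omega$ (using $|z|^2=r^2$).

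Next I would introduce $h(z):=z^2 f(z)\,g_z(z)$. As a product of the holomorphic factor $z^2 f(z)$ and the derivative $g_z(z)$ of the meromorphic Gauss map, $h$ is meromorphic on $\Omega$. Since $|h|$ is the positive constant $A/2$ throughout $\Omega$, $h$ has no zeros and no poles; it is therefore holomorphic and non-vanishing, and the locally defined $\log h$ has the constant real part $\log(A/2)$. A locally holomorphic function with constant real part is locally constant, so by connectedness $h\equiv\tfrac{A}{2}\,e^{i\theta_0}$ for some real $\theta_0$. Solving this equation for $f$ immediately yields formula (3).

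Finally, I would verify the accompanying claim that $g$ can only have simple zeros and simple poles by comparing orders. At an interior pole of $g$ of order $\ell$, condition 1 gives $f$ a zero of order $2\ell$ while $g_z$ has a pole of order $\ell+1$, so $h$ vanishes to order $2\ell-(\ell+1)=\ell-1$; since $h$ is a nonzero constant, this forces $\ell=1$. Similarly, at a zero of $g$ of order $m$ (where $f$ is holomorphic and non-zero), $g_z$ vanishes to order $m-1$, and the vanishing order of $h$ being zero forces $m=1$.

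The proof is essentially a short holomorphic-modulus argument rather than a hard theorem; the only possible obstacle is confirming that the candidate formula $f=(A/2)e^{i\theta_0}/(z^2 g_z)$ is compatible with the Weierstrass matching condition 1 and that $g_z$ has no other zeros or poles that would spoil the formula. The order comparison in the last paragraph is precisely what accomplishes this, so once the meromorphic modulus step is in place there is no remaining difficulty.
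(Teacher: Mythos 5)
Your proposal is correct and follows essentially the same route as the paper: substitute the Weierstrass expressions for $\Delta\phi$ and $e^{2\phi}$ into the Gauss equation to get $2|z^2fg_z|=A$, then conclude $z^2fg_z$ is a constant of modulus $A/2$ and use condition 1 to force simple zeros and poles of $g$. You merely make explicit the step the paper leaves implicit (a meromorphic function of constant positive modulus on a connected domain is constant), which is a welcome clarification but not a different argument.
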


 \section{The boundaries of the free boundary minimal annulus}

  In this section by using the Weierstrass representation of the minimal annulus and the boundary conditions we prove that the boundaries are planar circles.
  \begin{lem}
  The boundary curves of the free boundary minimal annulus are planar circles.
  \end{lem}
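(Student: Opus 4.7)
The plan is to reduce the statement to the claim that $\lambda_\theta = 0$ along each boundary component of $\Omega$, and then to address that rigidity via the Weierstrass representation from Theorem 2.

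Along a boundary component $\gamma(\theta) := U(r_0,\theta)$ with $r_0 \in \{R^{-1},R\}$, I first set up the orthonormal frame $\{U,\, U_\theta/|U_\theta|,\, n\}$ of $\mathbb{R}^3$. Pairwise orthogonality follows from $U\cdot U_\theta = \tfrac{1}{2}\partial_\theta |U|^2 = 0$ (since $|U|\equiv 1$ on $\partial\Omega$), $n\perp U_\theta$ by definition of the surface normal, and $n\perp U$ from the free boundary condition $U_r = kU$, which forces $U$ to lie in the tangent plane of the annulus. Using the previous Lemma and the identities $U_{\theta\theta}\cdot U = -|U_\theta|^2$, $U_{\theta\theta}\cdot n = N = A$, $U_{\theta\theta}\cdot U_\theta = \tfrac{1}{2}\partial_\theta|U_\theta|^2$, the second derivative expands as
$$
U_{\theta\theta} \;=\; -r_0^2\lambda\,U \;+\; \frac{\lambda_\theta}{2\lambda}\,U_\theta \;+\; A\,n,
$$
and a direct cross-product computation then shows that $\gamma'\times\gamma''$ is parallel to $B := n + \bigl(A/(r_0^2\lambda)\bigr)\,U$; this is the (unnormalized) Frenet binormal direction of $\gamma$.

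The curve $\gamma$ is planar if and only if the unit binormal $B/|B|$ is independent of $\theta$. Using the Weingarten relation $n_\theta = -\bigl(A/(r_0^2\lambda)\bigr)\,U_\theta$ (valid because $M=0$ makes the $\theta$-line a line of curvature with principal curvature $N/G = A/(r_0^2\lambda)$), a direct computation gives
$$
B_\theta \;=\; n_\theta \;+\; \bigl(A/(r_0^2\lambda)\bigr)_\theta\,U \;+\; \bigl(A/(r_0^2\lambda)\bigr)\,U_\theta \;=\; -\frac{A\,\lambda_\theta}{r_0^2\lambda^2}\,U.
$$
Thus $B_\theta$ lies in the $U$-direction, while $B$ itself has a nonzero $n$-component ($=1$) in addition to its $U$-component ($A/(r_0^2\lambda)\neq 0$, as $A > 0$ by the previous Lemma). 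Hence $B_\theta$ can only be parallel to $B$ when $B_\theta = 0$, i.e., when $\lambda_\theta = 0$ on $\partial\Omega$. Once this is known, $\gamma$ is planar; a planar curve lying on the unit sphere is then a circle.

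The main obstacle is therefore the rigidity statement $\lambda_\theta = 0$ on $\partial\Omega$. I plan to use the Weierstrass data via the holomorphic $\mathbb{C}^3$-valued map $W(z) := 2z\,U_z = rU_r - iU_\theta$ on $\Omega$, which satisfies $W\cdot W = 0$ and $|W|^2 = 2r^2\lambda$. On $\partial\Omega$, the free boundary condition together with $|U|=1$ yields $W|_{\partial\Omega} = \epsilon\sqrt{r_0^2\lambda}\,U - i\,U_\theta$ with $\epsilon = \pm 1$, so $\RE W \parallel U$ and $\IM W \perp U$. A Schwarz-type reflection of the holomorphic $W$ across each boundary circle, combined with the constraint $|U|^2\equiv 1$ on $\partial\Omega$, should force the Laurent expansion of $W$ on the boundary to contain only Fourier modes $n \in \{-1,0,1\}$, which is equivalent to $\lambda_\theta = 0$ on the boundary and to $\gamma$ being a uniformly parametrized planar circle. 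Carrying out this reflection/Laurent-constraint step cleanly is what I expect to be the hardest part of the proof.
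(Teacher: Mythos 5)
Your reduction of the lemma to the single scalar statement $\lambda_\theta=0$ on $\partial\Omega$ is correct and rather cleaner than what the paper does: the frame $\{U,U_\theta/|U_\theta|,n\}$, the expansion $U_{\theta\theta}=-r_0^2\lambda\,U+\tfrac{\lambda_\theta}{2\lambda}U_\theta+A\,n$, the identification of the binormal direction $B=n+\tfrac{A}{r_0^2\lambda}U$, and the computation $B_\theta=-\tfrac{A\lambda_\theta}{r_0^2\lambda^2}U$ all check out, and the linear independence of $U$ and $n$ does force $B_\theta=0$ whenever the unit binormal is constant. So the equivalence ``boundary planar $\Leftrightarrow$ $\lambda_\theta\equiv 0$ on that component'' is established.

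However, the proof stops exactly where the lemma's actual content begins. The rigidity statement $\lambda_\theta=0$ on $\partial\Omega$ is not a formality that follows from the frame computation; it is the whole difficulty, and you concede as much (``should force \dots what I expect to be the hardest part''). The sketched route --- Schwarz reflection of $W=2zU_z$ across the boundary circles plus the constraint $|U|^2\equiv 1$ forcing only Fourier modes $n\in\{-1,0,1\}$ --- is not carried out, and as stated it is also not quite an equivalence: if $W|_{\partial\Omega}$ had only modes $\{-1,0,1\}$ then $|W|^2=2r_0^2\lambda$ would a priori still contain modes up to $\pm 2$, so extra orthogonality relations among the coefficients would still need to be extracted. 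For comparison, the paper proves planarity by showing the torsion vanishes at an arbitrary boundary point $P$: it normalizes the picture by a rotation, expands the Gauss map $g=R+a_1(w-w_0)+a_2(w-w_0)^2+\cdots$, writes out the Weierstrass integrals to third order, and extracts from the two boundary conditions (the free boundary condition $U\parallel U_u$ and the sphere condition $X^2+Y^2+Z^2=1$) the coefficient identities forcing $a_1$, $a_2$ and $\Psi=\tfrac{4a_2^2}{a_1^2}-\tfrac{3a_3}{a_1}$ to be real, whence $X_{\theta\theta\theta}=Z_{\theta\theta\theta}=0$ at $P$ and $\det(U_\theta,U_{\theta\theta},U_{\theta\theta\theta})=0$. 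An equivalent amount of coefficient-level work would be needed to substantiate your $\lambda_\theta=0$ claim, so as it stands the proposal has a genuine gap at its central step.
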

  \begin{proof}
  For any point $P$ on the boundary $\partial \Omega$, its conformal coordinate is denoted by $z_0=Re^{i\theta_1},$ (or $ z_0=R^{-1}e^{i\theta_1}$ ). We compute the torsion of the boundary curve on the point $P$.

  For simplicity, if needed, by taking a rotation of the ball $\mathbb{B}^3$, we can put $P(z_0)$ at a special position such that the tangent vector at point $P(z_0)$ of boundary curve is parallel to $Y$ axis, and the unit normal vector of minimal annulus at the point $P(z_0)$ is located in $XZ$ plane, properly,
 $$
  \frac{dU(\theta)}{d\theta}|_{\theta=\theta_1} \parallel (0,1,0),\ \ g(z_0)=R.
 $$
 With the assumption that $z_0$ is not a pole, then $g(z)$ is holomorphic around the point $z_0$. For convenience of computation we choose another complex chart around $P$: Let $z=e^w$, i.e. $w=u+iv=\ln z=\ln r+i\theta,\ \ u=\ln r ,\ v=\theta$, $w_0=\ln z_0=\ln R+i\theta_1$. Assume that the minimal annulus can be extended across the boundary defined on a region for a small $\delta>0$, $$\Omega^*=\{w=u+iv|\ln R-\delta<u<\ln R+\delta, \ \theta_1-\delta<\theta<\theta_1+\delta\}.$$
  The minimal annulus on this region $\Omega^*$ has the Weierstrass data
  $g(z)=g(e^w)$ (also denoted by $g(w)$), which is holomorphic and  $$\frac{dg(w)}{dw}=\frac{dg(z)}{dz}e^w,\ \ \
\omega=\frac{A}{2}\frac{e^{i\theta_0}}{g_zz^2}dz=\frac{A}{2}\frac{e^{i\theta_0}}{g_w}dw.$$
Since $g(w)$ is holomorphic on domain $\Omega^*$ around $w_0$ and $g(w_0)=g(z_0)=R$, then $g$ and $g_w$ can be expressed in series of
$$
  \begin{array}{ll}
    g=R+a_1(w-w_0)+a_2(w-w_0)^2+a_3(w-w_0)^3+\cdots , \\
    g_w=a_1+2a_2(w-w_0)+3a_3(w-w_0)^2+\cdots .
  \end{array}
$$
As $\omega$ is holomorphic on the domain $\Omega^*$ around $w_0$ hence $a_1\neq 0$. Consequently we get
\begin{align*}
\frac{1}{g_w} =&\frac{1}{a_1}-\frac{2a_2}{a_1^2}(w-w_0)+(\frac{4a_2^2}{a_1^3}-\frac{3a_3}{a_1^2})(w-w_0)^2+\\
&(-\frac{8a_2^3}{a_1^4}+\frac{12a_2a_3}{a_1^3}-\frac{4a_4}{a_1^2})(w-w_0)^3+\cdots,\\
\frac{g^2}{g_w} =&\frac{R^2}{a_1}+2R(1-\frac{a_2}{a_1^2}R)(w-w_0)+[(\frac{4a_2^2}{a_1^3}-\frac{3a_3}{a_1^2})R^2-\frac{2a_2}{a_1}R+a_1](w-w_0)^2+\\
&[(-\frac{8a_2^3}{a_1^4}+\frac{12a_2a_3}{a_1^3}-\frac{4a_4}{a_1^2})R^2+4(\frac{a_2^2}{a_1^2}-\frac{a_3}{a_1})R](w-w_0)^3+\cdots,\\
\frac{g}{g_w} =&\frac{R}{a_1}+(1-\frac{2a_2}{a_1^2}R)(w-w_0)+[(\frac{4a_2^2}{a_1^3}-\frac{3a_3}{a_1^2})R-\frac{a_2}{a_1}](w-w_0)^2+\\
&[(-\frac{8a_2^3}{a_1^4}+\frac{12a_2a_3}{a_1^3}-\frac{4a_4}{a_1^2})R+2(\frac{a_2^2}{a_1^2}-\frac{a_3}{a_1})](w-w_0)^3+\cdots.\\
\end{align*}
Now let $(X,Y,Z)$ denote the coordinates of points in $R^3$, and take the Weierstrass representation of the annulus on the domain $\Omega^*$, $U:\Omega^*\rightarrow R^3$
\begin{align*}
 X(w)&=\text {Re}\int^w_{w_0}(1-g(w)^2)\frac{A}{2}\frac{e^{i\theta_0}}{g_w}dw+X_0,\\
Y(w)&=\text {Re}\int^w_{w_0}i(1+g(w)^2)\frac{A}{2}\frac{e^{i\theta_0}}{g_w}dw+Y_0,\\
Z(w)&=\text {Re}\int^w_{w_0}2g(w)\frac{A}{2}\frac{e^{i\theta_0}}{g_w}dw+Z_0.
\end{align*}
So we get
\begin{align*}
 X(w)=&X_0+\frac{A}{2}\text {Re} \ e^{i\theta}\{\frac{1-R^2}{a_1}(w-w_0)-[(1-\frac{a_2}{a_1^2}R)R+\frac{a_2}{a_1^2}](w-w_0)^2\\
      &+\frac{1}{3}[(\frac{4a_2^2}{a_1^3}-\frac{3a_3}{a_1^2})(1-R^2)+\frac{2a_2}{a_1}R-a_1](w-w_0)^3+\cdots\},\\
 Y(w)= &Y_0+
\frac{A}{2}\text {Re} \ ie^{i\theta}\{\frac{1+R^2}{a_1}(w-w_0)+[(1-\frac{a_2}{a_1^2}R)R-\frac{a_2}{a_1^2}](w-w_0)^2\\
       &+\frac{1}{3}[(\frac{4a_2^2}{a_1^3}-\frac{3a_3}{a_1^2})(1+R^2)-\frac{2a_2}{a_1}R+a_1](w-w_0)^3+\cdots\},\\
 Z(w)= &Z_0+\frac{A}{2}\text {Re} \ 2e^{i\theta}\{\frac{R}{a_1}(w-w_0)+\frac{1}{2}(1-\frac{2a_2}{a_1^2}R)(w-w_0)^2\\
       &+\frac{1}{3}[(\frac{4a_2^2}{a_1^3}-\frac{3a_3}{a_1^2})R-\frac{a_2}{a_1}](w-w_0)^3+\cdots\}.\\
\end{align*}
On the boundary curve, $w=\ln R+i\theta, w_0=\ln R+i\theta_1$, therefore $w-w_0=i(\theta-\theta_1)$, appealing the above equations we have the vectors at the point $P(w_0)$ as
$$\left(
    \begin{array}{c}
      X_\theta \\
      Y_\theta \\
      Z_\theta \\
    \end{array}
  \right)(w_0)=\frac{A}{4}\left(
                            \begin{array}{c}
                              (1-R^2)(\frac{e^{i\theta_0}}{a_1}-\frac{e^{-i\theta_0}}{\bar {a}_1})i \\
                             -(1+R^2)(\frac{e^{i\theta_0}}{a_1}+\frac{e^{-i\theta_0}}{\bar {a}_1})\\
                              2R(\frac{e^{i\theta_0}}{a_1}-\frac{e^{-i\theta_0}}{\bar {a}_1})i\\
                            \end{array}
                          \right)\parallel \left(
                                             \begin{array}{c}
                                               0 \\
                                               1 \\
                                               0 \\
                                             \end{array}
                                           \right),$$
hence $\frac{e^{i\theta_0}}{a_1}$ is a real number, in other words $a_1=\pm|a_1|e^{i\theta_0},$ and (from boundary assumption at $P(w_0)$)
$$\left(
    \begin{array}{c}
      X_u \\
      Y_u \\
      Z_u \\
    \end{array}
  \right)(w_0)=\frac{A}{2}\left(
                            \begin{array}{c}
                              (1-R^2)\frac{e^{i\theta_0}}{a_1} \\
                               0  \\
                              2R\frac{e^{i\theta_0}}{a_1}\\
                            \end{array}
                          \right) \parallel \left(
                                             \begin{array}{c}
                                               X_0 \\
                                               Y_0 \\
                                               Z_0 \\
                                             \end{array}
                                           \right),$$
so the position vector of $P(w_0)$
$$\left(
                                             \begin{array}{c}
                                               X_0 \\
                                               Y_0 \\
                                               Z_0 \\
                                             \end{array}
                                           \right)=\frac{1}{1+R^2}\left(
                            \begin{array}{c}
                              1-R^2 \\
                               0  \\
                              2R \\
                            \end{array}
                          \right).$$

Firstly, for convenience denoting $\Psi=\frac{4a_2^2}{a_1^2}-\frac{3a_3}{a_1},$ the coordinate functions of the boundary curve near $P$ are
\begin{align*}
X= &X_0+\frac{A}{4}\frac{e^{i\theta_0}}{a_1}\{[(\frac{a_2}{a_1}+\frac{\bar a_2}{\bar a_1})(1-R^2)+R(a_1+\bar a_1)](\theta-\theta_1)^2\\
   &+\frac{i}{3}[(\bar \Psi-\Psi)(1-R^2)+2R(\bar {a}_2-a_2)+a_1^2-\bar {a}_1^2](\theta-\theta_1)^3+\cdots \},\\
Y= &\frac{A}{4}\frac{e^{i\theta_0}}{a_1}\{-2(1+R^2)(\theta-\theta_1)+i[(\frac{a_2}{a_1}-\frac{\bar a_2}{\bar a_1})(1+R^2)+R(\bar a_1- a_1)](\theta-\theta_1)^2\\
   &+\frac{1}{3}[(\bar \Psi+\Psi)(1+R^2)-2R(\bar {a}_2+a_2)+a_1^2+\bar {a}_1^2](\theta-\theta_1)^3+\cdots \},\\
Z= &Z_0+\frac{A}{2}\frac{e^{i\theta_0}}{a_1}\{[(a_1+\bar a_1)-2R(\frac{a_2}{a_1}+\frac{\bar a_2}{\bar a_1})](\theta-\theta_1)^2\\
   &+\frac{i}{3}[(\bar \Psi-\Psi)R+a_2-\bar {a}_2](\theta-\theta_1)^3+\cdots \}.\\
\end{align*}

 The normal vector field of the boundary near the point $P$ can be written as
\begin{align*}
X_u= &\frac{A}{4}\frac{e^{i\theta_0}}{a_1}\{2(1-R^2)+2i[(\frac{\bar a_2}{\bar a_1}-\frac{a_2}{a_1})(1-R^2)+R(\bar a_1-a_1)](\theta-\theta_1)\\
     &-[(\bar \Psi+\Psi)(1-R^2)+2R(\bar {a}_2+a_2)-(a_1^2+\bar {a}_1^2)](\theta-\theta_1)^2+\cdots \},\\
Y_u= &\frac{A}{4}\frac{e^{i\theta_0}}{a_1}\{2[(\frac{\bar a_2}{\bar a_1}+\frac{a_2}{a_1})(1+R^2)-R(\bar a_1+a_1)](\theta-\theta_1)\\
     &+i[(\bar \Psi-\Psi)(1+R^2)+2R(a_2-\bar {a}_2)-a_1^2+\bar {a}_1^2](\theta-\theta_1)^2+\cdots \},\\
Z_u= &\frac{A}{2}\frac{e^{i\theta_0}}{a_1}\{2R+i[2(\frac{\bar a_2}{\bar a_1}-\frac{a_2}{a_1})R-\bar a_1+a_1](\theta-\theta_1)\\
      &+[(\bar \Psi+\Psi)R-(\bar {a}_2+a_2)](\theta-\theta_1)^2+\cdots \}.\\
\end{align*}

Next we recall the boundary orthogonal assumption, which is equivalent to $(X,Y,Z)\parallel(X_u,Y_u,Z_u)$, or
\begin{equation}
\frac{X_u}{X} =\frac{Y_u}{Y} =\frac{Z_u}{Z}.
\end{equation}

From the first equality of (4), i.e. $YX_u =XY_u$, comparing the coordinates of $(\theta-\theta_1)$ and $(\theta-\theta_1)^2$ on two sides of the equation we get
\begin{equation}
A\frac{e^{i\theta_0}}{a_1}= -\frac{2}{(1+R^2)^2}\{(\frac{\bar a_2}{\bar a_1}+\frac{a_2}{a_1})(1+R^2)-R(\bar a_1+a_1)\},
\end{equation}
\begin{equation}
{\bar \Psi}-\Psi=\Theta_1-\Theta_2
 \end{equation}
$$\Theta_1=\frac{A}{2}\frac{e^{i\theta_0}}{a_1}\{3(\frac{a_2}{a_1}-\frac{\bar a_2}{\bar a_1})(1+R^2)-\frac{R(1+3R^2)}{1-R^2}(\bar a_1-a_1)\},$$
$$ \Theta_2=\frac{1}{1+R^2}\{2R(a_2-\bar {a}_2)-a_1^2+\bar {a}_1^2\}.$$

In the same way, comparing the coordinates of $(\theta-\theta_1)$ and $(\theta-\theta_1)^2$ on two sides of the second equality of (4), i.e. $YZ_u =ZY_u$, we obtain the equation (5) once again and
\begin{equation}
{\bar \Psi}-\Psi=\Pi_1-\Pi_2,
\end{equation}
$$\Pi_1=\frac{A}{2}\frac{e^{i\theta_0}}{a_1}\{3(\frac{a_2}{a_1}-\frac{\bar a_2}{\bar a_1})(R^2+1)+(\frac{1}{R}+2R)(\bar a_1-a_1)\},$$
 $$  \Pi_2=\frac{1}{1+R^2}\{2R(a_2-\bar {a}_2)-a_1^2+\bar {a}_1^2\}.$$

On the other hand the boundary curve is on the sphere, so $X^2+Y^2+Z^2=1$. Comparing the coordinates of $(\theta-\theta_1)^2$ and $(\theta-\theta_1)^3$ on two sides of the equation we also get
\begin{equation}
A\frac{e^{i\theta_0}}{a_1}= -\frac{2}{(1+R^2)^3}[(\frac{\bar a_2}{\bar a_1}+\frac{a_2}{a_1})(1+R^4)-R^3(\bar a_1+a_1)],
\end{equation}

\begin{equation}
{\bar \Psi}-\Psi=\Upsilon_1-\Upsilon_2,
\end{equation}
$$\Upsilon_1=\frac{3A}{2}\frac{e^{i\theta_0}}{a_1}\{(\frac{a_2}{a_1}-\frac{\bar a_2}{\bar a_1})\frac{(1+R^2)^3}{1+R^4}+\frac{R(1+R^2)^2}{1+R^4}(\bar a_1-a_1)\},$$
  $$\Upsilon_2=\frac{2R^3}{1+R^4}(a_2-\bar {a}_2)+\frac{1-R^2}{1+R^4}(a_1^2-\bar {a}_1^2).$$

To combine the equation (5) with (8) we have
\begin{equation}
\frac{a_2}{a_1}+\frac{\bar a_2}{\bar a_1}=\frac{1}{2R}(a_1+\bar a_1),\ \ \ A\frac{e^{i\theta_0}}{a_1}=\frac{R^2-1}{R(1+R^2)^2}(a_1+\bar a_1).
\end{equation}
From (6), (7) and (10) we get that $(1+R^2)^2((a_1^2-\bar a_1^2)=0,$ as  $A\frac{e^{i\theta_0}}{a_1}\neq 0$ , then
$$a_1=\bar a_1,\ \  Ae^{i\theta_0}=2a_1^2\frac{R^2-1}{R(1+R^2)^2}>0,\ \ e^{i\theta_0}=1.$$

Applying it to equations (6,7,9), and with a simple computation we have that
$a_2=\bar a_2,$ and  $ \Psi=\bar \Psi.$  We take the derivatives of coordinates functions with respect to $\theta$, at point $P$,
$$ X_{\theta \theta \theta}=i\frac{A}{2}\frac{e^{i\theta_0}}{a_1}[(\bar \Psi-\Psi)(1-R^2)+2R(\bar a_2-a_2)+a_1^2-\bar a_1^2]=0,$$
$$Z_{\theta \theta \theta}=iA\frac{e^{i\theta_0}}{a_1}[(\bar \Psi-\Psi)R+ a_2-\bar a_2]=0.$$
Hence at point $P$,
$$\det \left(
    \begin{array}{ccc}
      X_\theta &  X_{\theta \theta} &  X_{\theta \theta \theta }\\
      Y_\theta &  Y_{\theta \theta} &  Y_{\theta \theta \theta }\\
     Z_\theta &  Z_{\theta \theta} &  Z_{\theta \theta \theta } \\
    \end{array}
  \right)=\det \left(
    \begin{array}{ccc}
      0 &  X_{\theta \theta} &  0\\
      Y_\theta &  Y_{\theta \theta} &  Y_{\theta \theta \theta }\\
     0 &  Z_{\theta \theta} &  0 \\
    \end{array}
  \right)=0.$$
Thus the torsion of boundary curve at point $P$ vanishes, i.e.
$$\tau(w_0)=\frac{\det(U_\theta,U_{\theta\theta},U_{\theta\theta\theta})}{|U_\theta \times U_{\theta\theta}|^2}=0.$$
Since $P$ is any point on the boundary curves, then the boundary curves are therefore torsion free, and which are planar circles.
\end{proof}

\section{The Proof of the Theorem}

In this section, by using the fourier series representation of the annulus we prove the theorem, i.e.
 \begin{thm}
 A free boundary minimal immersed annulus in unit ball ${\mathbb{B}}^3$ in euclidean 3-space is congruent with the critical catenoid.
\end{thm}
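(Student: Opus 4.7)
The plan is to exploit that $U$, being conformal and minimal, is componentwise harmonic on the annulus $\Omega = \{R^{-1}\le r\le R\}$. Each coordinate $X^j$ therefore admits a Fourier--Laurent expansion
\[
X^j(r,\theta) \;=\; \alpha^j + \beta^j \ln r + \sum_{n\ne 0}\bigl(A^j_n r^n + B^j_n r^{-n}\bigr)e^{in\theta},
\]
with the Fourier coefficients subject to the usual reality constraints. Since the critical catenoid parametrization in Section~2 has only the $\ln r$ mode (in the axial direction) and the $n=\pm 1$ modes (in the transverse directions), my target is to show that after an isometry of $\mathbb{B}^3$ every other Fourier coefficient of $U$ must vanish.

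The first step is to reduce to the case where both boundary planes are horizontal. By Lemma~4 each boundary curve lies in some affine plane, and the free boundary condition $U_r\parallel U$ on $\partial\Omega$ forces each such plane to be perpendicular to a line through the origin. I would show, by combining this orthogonality with the global Weierstrass description of the Gauss map from Theorem~3 and the pointwise identities derived in the proof of Lemma~4 (in particular $a_1\in\mathbb{R}$ and $e^{i\theta_0}=1$ after the local normalization), that the two boundary planes must share a common normal direction. Rotating $\mathbb{B}^3$ so that this direction is $e_3$, the coordinate $X^3$ is then constant on each boundary circle $|z|=R^{\pm 1}$, and matching Fourier modes in $\theta$ gives, for each $n\ne 0$, the linear system
\[
A^3_n R^n + B^3_n R^{-n} = 0, \qquad A^3_n R^{-n} + B^3_n R^n = 0,
\]
whose determinant $R^{2n}-R^{-2n}$ is nonzero for $R>1$. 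Hence every nonzero Fourier mode of $X^3$ vanishes and $X^3=\alpha^3+\beta^3\ln r$; in particular $X^3_z = \beta^3/(2z)$.

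With $X^3$ pinned down, the conformality identity $U_z\cdot U_z=0$ collapses to the single algebraic constraint $(X^1_z)^2 + (X^2_z)^2 = -(\beta^3)^2/(4z^2)$. Inserting the Fourier--Laurent expansions of $X^1$ and $X^2$ and matching coefficients power by power in $z$ should force all modes with $|n|\ge 2$ to vanish and the $n=\pm 1$ modes to have $r$-profile proportional to $r+r^{-1}$, i.e.\ $X^1+iX^2 = \gamma(r+r^{-1})e^{i\theta}$ for some $\gamma\in\mathbb{C}$. At that stage $U$ already parametrizes a catenoid, and the remaining free-boundary conditions $|U|^2=1$ and $U_r\parallel U$ at $r=R^{\pm 1}$ pin down $\alpha^3,\beta^3,\gamma$ and $R$ to precisely the values $a,r_1,r_2$ prescribed in Section~2, giving the critical catenoid up to congruence.

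The main obstacle is the first step: showing that the two boundary planes are parallel so that a single ambient rotation can align both of them with the horizontal plane. Once this alignment is achieved, everything that follows is essentially mode-by-mode Fourier algebra driven by harmonicity, conformality, and the planar-circle structure of the boundaries. Establishing parallelism without any a priori symmetry assumption is where I expect the bulk of the technical difficulty to lie, and where the detailed pointwise information extracted in the proof of Lemma~4 together with the global Weierstrass representation of Theorem~3 will have to be exploited in tandem.
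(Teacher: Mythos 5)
Your overall strategy---Fourier--Laurent expansion of the harmonic coordinates, alignment of the boundary circles with a coordinate axis, then mode-killing---is the same skeleton as the paper's Section~6, but the proposal has two genuine gaps, one at each of the places where the paper does real work.

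First, the parallelism of the two boundary planes. You correctly identify this as the main obstacle, but you only gesture at a proof (``combining this orthogonality with the global Weierstrass description \dots''), and the ingredients you name are not the ones that settle it. The paper disposes of it with a short balancing argument: since $U$ is harmonic, Green's formula gives $\int_{\partial\Omega}\partial_\nu U\,ds=\int_\Omega\Delta U\,dA=0$, and the free boundary condition identifies the outward conormal derivative with the position vector $U$ along $\partial\Omega$, so $\int_{\partial\Omega(R)}U\,ds=-\int_{\partial\Omega(R^{-1})}U\,ds$. As each integral is (length)$\times$(center) of a round circle on $\mathbb{S}^2$, the two centers are antiparallel and the planes are parallel. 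Without something of this kind your first step is simply unproved.

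Second, and more seriously, the claim that after pinning $X^3=\alpha^3+\beta^3\ln r$ the conformality relation $(X^1_z)^2+(X^2_z)^2=-(\beta^3)^2/(4z^2)$ forces $X^1+iX^2=\gamma(r+r^{-1})e^{i\theta}$ is false. Writing $W=X^1+iX^2=h(z)+\overline{k(z)}$ with $h,k$ holomorphic, the constraint is $h'(z)k'(z)=-(\beta^3)^2/(4z^2)$, which has many single-valued non-catenoidal solutions on the annulus, e.g.\ $h'=-\tfrac{(\beta^3)^2}{4z^2}e^{z^2}$, $k'=e^{-z^2}$ (both residues vanish, so $W$ is single valued). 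So ``matching coefficients power by power'' cannot kill the higher modes from conformality alone. The paper closes exactly this loophole with Theorem~3: the constancy of the Hopf-type quantity (Lemma~2) forces the Weierstrass data to satisfy $f=\tfrac{A}{2}e^{i\theta_0}/(g_zz^2)$, so that once $Z$ is known one gets $g/g_z=c_1z$, hence $g=cz^m$ with $m\in\mathbb{Z}$, and then $1/g_z$ and $g^2/g_z$ are monomials, which is what actually collapses $X^1,X^2$ to the catenoid modes. You cite Theorem~3 only in connection with the parallelism step; it is in this second step that it is indispensable. As written, your argument would also need to invoke the boundary data again (circles centered on the axis, $|U|=1$, $U_r\parallel U$) to have any hope of excluding the spurious solutions, and you have not indicated how.
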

\begin{proof}
 Recall the Weierstrass representation of the minimal annulus. From section 4, and $e^{i\theta_0}=1$ in section 5, we have
 $$U:\Omega\rightarrow R^3, z\rightarrow U(z)=(X(z),Y(z),Z(z))$$
\begin{align*}
X(z)&=\text {Re}\int^z_{z_0}(1-g(z)^2)\frac{A}{2}\frac{1}{g_zz^2}dz+X_0,\\
Y(z)&=\text {Re}\int^z_{z_0}i(1+g(z)^2)\frac{A}{2}\frac{1}{g_zz^2}dz+Y_0,\\
Z(z)&=\text {Re}\int^z_{z_0}2g(z)\frac{A}{2}\frac{1}{g_zz^2}dz+Z_0.
\end{align*}
We know that $\frac{1}{g_z}, \ \frac{g^2}{g_z}$ and  $\frac{g}{g_z}$ are all holomorphic functions on the domain $\Omega$. Take the Laurent series of these functions as
$$
\frac{1}{g_z}=\sum^\infty_{k=-\infty}a_kz^k,\ \ \ \frac{g^2}{g_z}=\sum^\infty_{k=-\infty}b_kz^k,\ \ \ \frac{g}{g_z}=\sum^\infty_{k=-\infty}c_kz^k.
$$

Concretely we get
\begin{align}
 X(z)&=\frac{A}{2}\text {Re} \ \{\sum_{k\neq 0} \frac{a_{k+1}-b_{k+1}}{k}z^k+(a_1-b_1)\ln z\}+X_0, \\
Y(z)&=\frac{A}{2}\text {Re} \ i \{\sum_{k\neq 0} \frac{a_{k+1}+b_{k+1}}{k}z^k+(a_1+b_1)\ln z\}+Y_0, \\
Z(z)&=\frac{A}{2}\text {Re} \ \{\sum_{k\neq 0} \frac{2c_{k+1}}{k}z^k+2c_1\ln z\}+Z_0.
\end{align}
Here, $\bar a_1=- b_1$ and $c_1$ should be real number insuring the coordinate functions to be single valued.
The boundary curves of the minimal annulus are circles, denoted by $\partial \Omega(R)$ and $\partial \Omega(R^{-1})$. Because the coordinate functions of the  minimal annulus are harmonic functions, then by Green's formula and boundary orthogonal condition we have
$$\int_{\Omega}\Delta UdA=\int_{\partial \Omega }\frac{\partial U}{\partial \nu}ds=\int_{\partial \Omega }Uds=0,$$
here $\nu $ is the outward unit normal vector field on boundaries of the annulus, and hence we have
$$\int_{\partial \Omega(R) }Uds=-\int_{\partial \Omega(R^{-1})}Uds.$$
So the centers of the boundary circles are in opposite direction with respect to the center of the unit ball ${\mathbb{B}}^3$, which means the boundary circles are parallel to each other.

Without loss of generality, we can put the centers of the boundary circles on the Z axis by a rotation of the ball $\mathbb{B}^3$, then the coordinates $Z(R,\theta)$ and $Z(R^{-1},\theta)$ of the boundary circles are constant. From equation (13) we get the fourier series of $Z(r,\theta)(\ r=R, R^{-1})$  as bellow,
$$Z(r,\theta)=\frac{A}{2}\sum_{k\neq 0}\frac{1}{2k}(2c_{k+1}r^k-2\bar c_{-k+1}r^{-k})e^{ik\theta}+Ac_1\ln r+Z_0.$$
As $Z(r,\theta)(\ r=R, R^{-1})$ are constant, then the coordinates of the $e^{ik\theta}$'s should vanish, i.e. $$c_{k+1}r^k-\bar c_{-k+1}r^{-k}=0, \ r=R, R^{-1}; \ k=\pm 1,\pm 2, \cdots,
 $$ which are equivalent to $c_{k+1}=0,\  k=\pm 1,\pm 2, \cdots$. Thus
 $$\frac{g}{g_z}=\sum^\infty_{k=-\infty}c_kz^k=c_1z.$$
 From here we know $c_1\neq 0$, otherwise, $g=0$ and the annulus is flat, which is a contradiction. Hence
 $$g=cz^m,\ \ m=\frac{1}{c_1},\ \  c\neq 0,$$
  $c$ is a constant number, and $m\neq 0$. Note that $g$ is a simple valued function defined on the domain $\Omega$, then $m$ should be an integral number. Through a simple computation, we have
  \begin{align*}
\frac{1}{g_z}=\sum^\infty_{k=-\infty}a_kz^k=\frac{c_1}{c}z^{1-m},\ \ a_{1-m}=\frac{c_1}{c},\\
\frac{g^2}{g_z}=\sum^\infty_{k=-\infty}b_kz^k=cc_1z^{1+m},\ \ b_{1+m}=cc_1.
\end{align*}

 Since $m\neq 0$, therefore $a_1=0,\ b_1=0$. Then we have the coordinate functions of the boundary circles
\begin{align*}
X(r,\theta)&=X_0-\frac{Ac_1}{4m}(\phi e^{mi\theta}+ \bar \phi e^{-mi\theta }), \\
Y(r,\theta)&=Y_0+\frac{Ac_1}{4m}i(\phi e^{mi\theta}- \bar \phi e^{-mi\theta }), \\
Z(r,\theta)&=Z_0+Ac_1\ln r,
\end{align*}
where $\phi=cr^m+\bar c^{-1}r^{-m},\ \ r=R,\ R^{-1}$. If $|m|\geq 2$, the annulus is not embedded, which contradicts our assumption, then $m=\pm 1$, hence $c_1=\pm 1$. Compute the square of radius of the boundary circles, $X^2+Y^2$, which are constant and  independent of $\theta$, consequently we have $X_0=Y_0=0$. Then the annulus can be written as $$U: \Omega \rightarrow R^3, \ (r,\theta) \rightarrow U(r,\theta),$$
$$U(r,\theta)=-A(\cosh (\ln l^{\pm 1}r)\cos (\theta \pm \beta), \cosh (\ln l^{\pm 1}r)\sin (\theta \pm \beta), \mp\ln r-Z_0/A),$$
here the constant $c$ is taken in the form $c=le^{i\beta}$. Which implies that the minimal annulus is contained in a Catenoid, it must be the critical Catenoid [1]. The theorem is proved.
\end{proof}

\section*{Acknowledgment}
We would like to express gratitude to Professor Lei Ni, for his suggestions and discussions, and also thanks to Professor Hailiang Li, Zhenlei Zhang and Associate Professor Shicheng Xu for their useful discussions. This work was partially supported by NSFC (Natural Science Foundation of China) No. 11771070.

\end{document}